\documentclass[11pt,reqno]{amsart}
\usepackage{amsmath}
\usepackage{amsfonts}
\usepackage{amssymb}
\usepackage{bm}
\usepackage[active]{srcltx}
\usepackage{cases}
\usepackage{stmaryrd}
\usepackage{xcolor}
\usepackage{graphics}
\usepackage{graphicx}
\hoffset=- 2cm \voffset=- 0cm
\setlength{\textwidth}{16cm}
\setlength{\textheight}{22cm}

\theoremstyle{plain}
\newtheorem{Thm}{Theorem}

\newtheorem{Lem}[Thm]{Lemma}
\newtheorem{Cor}[Thm]{Corollary}

\newcommand {\p}{\partial}
\newcommand{\q}{\quad}
\newcommand{\qq}{\qquad}

\newcommand{\eq}{\begin{equation}}
\newcommand{\eeq}{\end{equation}}

\def\a{\alpha}

\def\k{\kappa}

\def\p{\partial}

\def\O{\Omega}

\def\B{\bold B}

\def\E{\bold E}

\def\f{\bold f}
\def\F{\bold F}

\def\H{\bold H}

\def\h{\bold h}
\def\n{\bold n}
\def\r{\bold r}

\def\u{\bold u}

\def\w{\bold w}

\def\z{\bold z}

\def\0{\bold 0}

\def\mB{\mathcal B}

\def\mF{\mathcal F}

\def\Q1{\overline{Q1}}

\def\mU{\mathcal U}

\def\q{\quad}
\def\qq{\qquad}
\def\qqq{\qq\qq}

\def\ol{\overline}
\def\v{\vskip}

\def\curl{\text{\rm curl\,}}
\def\div{\text{\rm div\,}}
\def\spt{\text{\rm spt}}

%Command section
\errorcontextlines=0 \numberwithin{equation}{section}
\numberwithin{Thm}{section}

\begin{document}
\large
%Topmatter
\title[Parabolic Curl System]
{Regularity of a Parabolic System Involving Curl}

\author[X.B.Pan]{Xing-Bin Pan }

\address{School of Science and Engineering,
The Chinese University of Hong Kong (Shenzhen), Shenzhen 518172, Guangdong, China; and School of Mathematical Sciences, East China Normal University, Shanghai 200062, China}

\email{panxingbin@cuhk.edu.cn; xbpan@math.ecnu.edu.cn}

\thanks{ }

\keywords{parabolic curl system, regularity, Schauder estimate, Maxwell system}

\subjclass[2010]{35Q65; 35K51, 35K65,  35Q61}

\begin{abstract} This note presents a regularity result with proof for an initial-boundary value problem of a  linear parabolic system involving curl of the unknown vector field, subjected to the boundary condition of prescribing the tangential component of the solution.
\end{abstract}

\maketitle
%end topmatter

\tableofcontents

\section{Introduction}

We are interested in the regularity theory of linear parabolic systems involving curl. We believe that the regularity results are well-known to the experts. However it is difficult to find the statements with complete proofs in the literature. Therefore we wish to write out the conclusions with proofs, for our later references.
We wish to start our program with the equation of the following form
\eq\label{eqB}
\left\{\aligned
&{\p\u\over\p t}+a\,\curl^2\u+\mB\,\curl\u+c\u=\f,\q\div\u=0,\q\;& (t,x)\in Q_T,\\
&\u_T=\0,\qqq\, &(t,x)\in S_T,\\
&\u(0,x)=\u^0,\qq\;\;\;& x\in \O,
\endaligned\right.
\eeq
where $a, c$ are scalar functions, $\mB$ is a matrix-valued function, $Q_T=(0,T]\times\O$ with $\O$ being a bounded domain in $\Bbb R^3$ with smooth boundary $\p\O$, $S_T=(0,T]\times\p\O$. We denote $\curl\u\equiv \nabla\times\u$, and denote by $\u_T$ the tangential component of $\u$ at boundary $\p\O$, namely $\u_T=(\nu\times\u)\times\nu$, where $\nu$ is the unit outer normal vector of $\p\O$.
In this paper, we use $M(3)$ to denote the set of all $3\times 3$ matrices, and let
$$
C^{k+\a}_{t0}(\ol{\O},\div0)=\{\w\in C^{k+\a}(\ol{\O},\Bbb R^3):~ \div\w=0\;\;\text{in }\O,\;\; \w_T=\0\;\;\text{on }\p\O\}.
$$

Note that the boundary condition in \eqref{eqB} is to prescribe the tangential component of the solution, and it makes \eqref{eqB} significantly different to the usual parabolic equation with Dirichlet boundary condition which prescribes the full trace.
The regularity of weak solutions of \eqref{eqB} will be used in \cite{KP} to establish existence and regularity of weak solutions of the time-dependent model of Meissner states of superconductors.

\begin{Thm}\label{Thm1} Let $\O$ be a bounded domain in $\Bbb R^3$ with a $C^{3+\a}$ boundary, $Q_{T}=(0,T]\times \O$. Assume that
$$\aligned
& 0<\a<1,\qq a,\; c\in C^{\a,\a/2}(\overline{Q}_T),\qq a(t,x)\geq a_0>0,\\
& \mB\in C^{\a,\a/2}(\overline{Q}_T,M(3)),\qq
\f\in C^{\a,\a/2}(\overline{Q}_T,\Bbb R^3),\qq
 \u^0\in C^{2+\a}_{t0}(\overline{\O},\div0),
\endaligned
$$
and
\eq\label{adm}
a(0,x)[\curl^2\u^0(x)]_T+[\mB(0,x)\curl\,\u^0(x)]_T=[\f(0,x)]_T,\q x\in\p\O.
\eeq
If $\u$ is a weak solution of \eqref{eqB} on $Q_{T}$, then $\u\in C^{2+\a,1+\a/2}(\overline{Q}_T)$ and
$$
\|\u\|_{C^{2+\a,1+\a/2}(\overline{Q}_T)}\leq C\{\|\f\|_{C^{\a,\a/2}(Q_{T})}+\|\u^0\|_{C^{2+\a}(\overline{\O})}\},
$$
where $C$ depends only on $\O, T, \a$ and the $C^{\a,\a/2}(\overline{Q}_T)$ norm of $a, b, B$.
\end{Thm}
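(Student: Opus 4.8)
The plan is to reduce the problem to scalar parabolic Schauder estimates applied componentwise, exploiting two structural facts: that the divergence-free constraint turns the principal operator into a vector heat operator, and that the single tangential boundary condition, combined with $\div\u=0$, secretly supplies a Neumann condition for the normal component. Since $\div\u=0$ we have the pointwise identity $\curl^2\u=-\Delta\u+\nabla(\div\u)=-\Delta\u$, so \eqref{eqB} is equivalent to
\[
\p_t\u-a\,\Delta\u+\mB\,\curl\u+c\u=\f,
\]
a system whose principal part $\p_t-a\Delta$ is diagonal (a scalar times the identity), the only coupling between components entering through the first-order term $\mB\,\curl\u$ and through the constraint itself.

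Before invoking Schauder theory I would first upgrade the weak solution to a class in which that first-order term is admissible. For each fixed $t$ the field $\u(t,\cdot)$ satisfies $\div\u=0$ in $\O$ and $\u_T=\0$ on $\pO$, with $\curl\u$ controlled by the equation; the standard div--curl elliptic estimate for this boundary value problem yields spatial regularity, and combining this with parabolic $L^p$ theory and Sobolev embedding along a short bootstrap ladder gives $\u,\nabla\u\in C^{\a,\a/2}(\QT)$. Consequently $F_i:=f_i-(\mB\,\curl\u)_i-c\,u_i\in C^{\a,\a/2}(\QT)$, so that each component now solves a scalar equation $\p_t u_i-a\,\Delta u_i=F_i$ with a genuinely known Hölder right-hand side, and the coupling no longer obstructs the scalar theory.

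The heart of the argument is the boundary estimate, which I would carry out after a partition of unity into interior and boundary patches. In the interior the classical interior parabolic Schauder estimate applied to $\p_t u_i-a\Delta u_i=F_i$ gives the $C^{2+\a,1+\a/2}$ bound directly. Near a boundary point I would flatten $\pO$ so that locally $\O=\{x_3>0\}$ with outer normal $\nu=(0,0,-1)$; then $\u_T=\0$ reads $u_1=u_2=0$ on $\{x_3=0\}$, whence the tangential derivatives $\p_1u_1,\p_2u_2$ vanish there, and $\div\u=0$ forces $\p_3u_3=0$ on the boundary. Thus, to principal order, $u_1,u_2$ solve the homogeneous Dirichlet heat problem and $u_3$ solves the homogeneous Neumann heat problem, with curvature of $\pO$ and the coefficient freeze contributing only lower-order perturbations to these boundary operators. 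Each such boundary operator complements $\p_t-a\Delta$ in the Lopatinskii--Shapiro sense, so Solonnikov's boundary Schauder estimate applies componentwise; the condition \eqref{adm} is precisely the first-order compatibility at the corner $\{t=0\}\times\pO$ needed to reach $C^{2+\a,1+\a/2}$ up to $t=0$. Summing the local estimates and absorbing lower-order terms yields the global bound.

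The step I expect to be the main obstacle is the rigorous boundary analysis: justifying that on a curved boundary, and after freezing variable coefficients, the passage from the single condition $\u_T=\0$ together with the interior constraint $\div\u=0$ to an effectively decoupled Dirichlet/Neumann system introduces only controllable lower-order terms, and verifying that the resulting problem genuinely satisfies the complementing condition. Care is also needed to confirm that the divergence-free constraint stays consistent with the componentwise estimates throughout the localization, flattening, and perturbation procedure.
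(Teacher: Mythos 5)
Your proposal is correct in outline and takes essentially the same approach as the paper: use $\div\u=0$ to replace $\curl^2\u$ by $-\Delta\u$, read off Dirichlet conditions for the tangential components and a Neumann-type condition for the normal component from the constraint (on a curved boundary this becomes a Robin condition with a mean-curvature coefficient, removed by an exponential factor), bootstrap with parabolic $L^p$ estimates and Sobolev embedding until $\mB\,\curl\u\in C^{\a,\a/2}$, and then apply scalar Schauder theory componentwise, with \eqref{adm} serving as the corner compatibility condition --- precisely the paper's Lemmas \ref{Lem2.1}--\ref{Lem2.3} and Section 3. Two caveats: your fixed-$t$ div--curl elliptic estimate is circular for a weak solution (controlling $\curl^2\u$ from the equation presupposes $\p_t\u$), so the bootstrap must run entirely through the parabolic $L^p$ route you also name, and --- as the paper emphasizes --- over shrinking half-balls, since the cut-off errors involve $\nabla\u$ rather than $\nabla\w$ and cannot be absorbed on a fixed domain; and the curved-boundary decoupling you flag as the main obstacle is resolved in the paper by principal-curvature coordinates together with differentiating the divergence constraint to eliminate the mixed second derivatives from the frame components of $\curl^2\u$, which confirms your expectation that curvature contributes only lower-order perturbations.
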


In \eqref{adm} we use $[\cdot]_T$ to denote the tangential component of the enclosed vector. Let us mention that the assumption $\u^0\in C^{2+\a}_{t0}(\ol{\O},\div0)$ implies that $\u^0_T=\0$ for $x\in \p\O$, which is consistent with the boundary condition $\u_T=\0$. This together with the assumption \eqref{adm} consists of the compatibility condition for the problem \eqref{eqB}.

\section{Estimates Near Flat Boundary}

\subsection{$W^{2,1,q}$-estimates}\

We consider regularity of weak solutions of \eqref{eqB}, where $a, c$ are scalar functions and $\mB$ is a matrix-valued function.
Let $\u$ be a weak solution of \eqref{eqB}. Then $\u\in L^2(0,T; H^1(\O,\Bbb R^3))$. To get higher regularity of the solutions, one may first use the difference method to show that $\u\in L^2(0,T; H^2(\O,\Bbb R^3))$ and $\p_t\u\in L^2(0,T; L^2(\O,\Bbb R^3))$, then show $\u$ is of $C^{2+\a, 1+\a/2}$. Here we use the different approach. We shall start with a weak solution $\u\in L^2(0,T; H^1(\O,\Bbb R^3))$ and show directly $\u$ is of $C^{2+\a,1+\a/2}$.

We shall derive the a priori estimates for smooth functions. Then the regularity of weak solutions follow from the estimates.

By considering cut-off, we only need to examine regularity near boundary. We start with a flat boundary. Denote by $B^+_R$ the upper half ball with center at the origin and radius $R$, and
$$
\Sigma_R=\{x=(x_1,x_2,0):~ |x|<R\}.
$$
Let
$$
Q_{R,T}=(0,T]\times B^+_R,\qq \Gamma_{R,T}=(0,T]\times \Sigma_R.
$$
With the divergence-free condition, \eqref{eqB} can be written in the following form
\eq\label{eqB.1}
\left\{\aligned
&{\p\u\over\p t}-a\,\Delta\u+\mB\,\curl\u+c\u=\f,\q \div\u=0,\q\;& (t,x)\in Q_{R,T},\\
&\u_T=\0,\qqq\, &(t,x)\in \Gamma_{R,T},\\
&\u(0,x)=\u^0,\qq\;\;\;& x\in B^+_R.
\endaligned\right.
\eeq
As mentioned in the introduction, the boundary condition in \eqref{eqB.1} is to prescribe the tangential component, but not the full trace, of the solution. As such, the regularity of \eqref{eqB.1} is not a direct consequence of the regularity theory of the classical initial-Dirichlet boundary problem of parabolic equations.

The compatibility condition \eqref{adm} can be written as
\eq\label{adm2}
-a(0,x)[\Delta  \u^0(x)]_T+[\mB(0,x)\curl\,\u^0(x)]_T=[\f(0,x)]_T,\q x\in\p\O.
\eeq
Note that $\Gamma_{R,T}$ is the flat part of the parabolic boundary of $Q_{R,T}$. We shall establish the following local estimate:

\begin{Lem}\label{Lem2.1}
Assume that
$$\aligned
&a,\; c\in C^{\a,\a/2}(\overline{Q}_{R_0,T}),\qq a(t,x)\geq a_0>0,\qq 1<q<\infty,\\
&\mB\in C^{\a,\a/2}(\overline{Q}_{R_0,T},M(3)),\qq \f\in L^q(Q_{R_0,T},\Bbb R^3),\\
& \u^0\in W^{2,q}(B_{R_0}^+,\Bbb R^3),\qq \div\u^0=0\;\;\text{\rm in }B_{R_0},\qq \u^0_T=\0\;\;\text{\rm on }\Sigma_{R_0},
\endaligned
$$
and assume \eqref{adm} holds.
If $\u$ is a weak solution of \eqref{eqB.1} on $Q_{R_0,T}$, then for any $0<R<R_0$ we have $\u\in W^{2,q}(Q_{R,T},\Bbb R^3)$ and
$$
\|\u\|_{W^{2,1,q}(Q_{R,T})}\leq C\{\|\f\|_{L^q(Q_{R_0,T})}+\|\nabla_x\u\|_{L^2(Q_{R_0,T})}+\|\u\|_{L^2(Q_{R_0,T})}+\|\u^0\|_{W^{2,q}(B_{R_0}^+)}\},
$$
where $C$ depends only on $R_0$, $R$, $T$, $\mB$, $a$, $c$, $q$.
\end{Lem}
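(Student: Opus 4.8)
The plan is to exploit the flat geometry so that the single tangential boundary condition, once combined with the divergence-free constraint, decouples into standard scalar boundary conditions for the three components of $\u$; after that the classical $L^q$ theory for scalar parabolic initial-boundary value problems applies componentwise, and the only genuine work is a bootstrap to absorb the lower-order curl term.

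First I would record the crucial boundary observation. On the flat part $\Sigma_{R_0}$ the outer normal is $\nu=(0,0,-1)$, so for $\u=(u_1,u_2,u_3)$ one computes $\u_T=(u_1,u_2,0)$; hence $\u_T=\0$ on $\Gamma_{R_0,T}$ is precisely the homogeneous Dirichlet condition $u_1=u_2=0$ for the two tangential components. Since $u_1$ and $u_2$ vanish identically on $\Sigma_{R_0}$, their tangential derivatives $\partial_1u_1$ and $\partial_2u_2$ vanish there, and $\div\u=0$ then forces
$$
\partial_3u_3=-\partial_1u_1-\partial_2u_2=0\qquad\text{on }\Gamma_{R_0,T},
$$
which is a homogeneous Neumann condition for the normal component $u_3$. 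Thus the underdetermined tangential condition together with incompressibility is equivalent, on a flat boundary, to Dirichlet data for $u_1,u_2$ and Neumann data for $u_3$.

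Next I would localize. Choosing a radial cut-off $\zeta$ with $\zeta\equiv1$ on $B_R^+$ and $\spt\zeta\subset B_{R_0}^+$, one has $\partial_3\zeta=0$ on $\Sigma_{R_0}$, so that $\zeta u_1,\zeta u_2$ retain the homogeneous Dirichlet condition and $\zeta u_3$ retains the homogeneous Neumann condition. Using $\curl^2\u=-\Delta\u$ for divergence-free fields, each $w_i=\zeta u_i$ solves a scalar parabolic equation
$$
\partial_t w_i-a\,\Delta w_i=g_i\qquad\text{in }Q_{R_0,T},
$$
where $g_i$ collects $\zeta f_i$, the first-order coupling $-(\zeta\mB\,\curl\u)_i-c\zeta u_i$, and the commutator terms produced by the cut-off, all of which involve at most first derivatives of $\u$. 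Since $a\ge a_0>0$ lies in $C^{\a,\a/2}$, the operator $\partial_t-a\Delta$ is uniformly parabolic with continuous leading coefficient, and I would invoke the classical $L^q$ estimate for scalar parabolic initial-boundary value problems (Dirichlet for $i=1,2$, Neumann for $i=3$) to bound $\|w_i\|_{W^{2,1,q}}$ by $\|g_i\|_{L^q}$, the initial datum in $W^{2,q}$, and a lower-order norm of $w_i$. The hypotheses $\u^0_T=\0$, $\div\u^0=0$, and the relation \eqref{adm2} are exactly what furnish the zeroth- and first-order compatibility conditions demanded by this scalar theory at the parabolic corner $\{0\}\times\Sigma_{R_0}$.

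The remaining point is the integrability of the forcing, and this is where I expect the main (though routine) difficulty. When $q\le2$ the coupling term lies in $L^2\subset L^q(Q_{R_0,T})$ and $\f\in L^q$, so $g_i\in L^q$, and the scalar estimate gives the claim directly, with $\|\nabla_x\u\|_{L^2}$ appearing on the right through $\|\mB\,\curl\u\|_{L^q}\le C\|\nabla_x\u\|_{L^2}$. When $q>2$ the coupling term is a priori only in $L^2$, so I would first apply the scalar theory with exponent $2$ to get $w_i\in W^{2,1,2}$, then use the anisotropic parabolic Sobolev embedding to gain integrability of $\nabla\u$, feed the improved bound back into $g_i$, and iterate; since each step raises the exponent by a fixed amount governed by the parabolic dimension $n+2=5$, finitely many iterations reach any prescribed $q<\infty$, and chaining the estimates expresses the final bound in terms of $\|\f\|_{L^q}$, $\|\nabla_x\u\|_{L^2}$, $\|\u\|_{L^2}$, and $\|\u^0\|_{W^{2,q}}$ as stated. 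The real content of the lemma is the first step: recognizing that the tangential condition becomes a well-posed mixed Dirichlet/Neumann problem once the divergence-free structure is used, everything afterward being a careful application of scalar parabolic $L^q$ theory with a bootstrap on the lower-order curl term.
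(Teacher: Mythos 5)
Your proposal is correct and follows essentially the same route as the paper: on the flat boundary the divergence-free condition converts $\u_T=\0$ into Dirichlet conditions for $u_1,u_2$ and a homogeneous Neumann condition for $u_3$, one localizes with a cutoff whose normal derivative vanishes on $\Sigma_{R_0}$, applies the scalar parabolic $L^q$ theory componentwise, and bootstraps via the parabolic Sobolev embedding with exponents governed by the parabolic dimension $n+2=5$ (the paper's $p_{k+1}=5p_k/(5-p_k)$). The one detail left implicit in your write-up, which the paper makes explicit, is that the forcing is controlled by $\nabla\u$ rather than by the localized unknown, so each bootstrap step must re-localize on a strictly smaller half-ball (nested radii $R_k$ between $R$ and $R_0$, with a smooth domain $U$ replacing the nonsmooth half-ball), which is harmless since the lemma only asserts the estimate on $Q_{R,T}$ for $R<R_0$.
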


\begin{proof}
Let $\u$ be a solution of \eqref{eqB.1}.
Write
$$
\u=(u_1,u_2,u_3)^t,\qq \f=(f_1,f_2,f_3)^t,\qq
\mB\,\curl\u=\h=(h_1,h_2,h_3)^t.
$$
$u_1, u_2$ correspond the tangential component of $\u$ and $u_3$ corresponds to the normal component.
Recall the formula (see \cite[p.210]{DaL})
$$
\div\u=\div_\Gamma(\pi\u)+2(\nu\cdot\u)H(x)+{\p\over\p\nu}(\nu\cdot\u).
$$
In the above $\pi\u$ denotes the tangential component of $\u$ on the domain boundary, $\div_\Gamma$ denotes the surface divergence, and $H(x)$ is the mean curvature of the domain boundary. Applying the above equality on the flat part of the boundary $\Sigma_R$ where $H(x)\equiv 0$ we see that, the boundary condition $\u_T=\0$ together with the divergence-free condition $\div\u=0$ implies the Neumann boundary condition for $u_3$.
In fact, for $x\in \Sigma_R$ we have
$$
\pi\u=\u_T=(u_1,u_2,0),\qq \nu\cdot \u=u_3.
$$
Hence
$$
{\p u_3\over\p\nu}={\p\over\p\nu}(\nu\cdot\u)=\div\u-\div_\Gamma(\pi\u)-2(\nu\cdot\u)H(x).
$$
So
$$
u_1=0,\qq u_2=0,\qq {\p u_3\over \p\nu}=0\qq\text{on } \Gamma_{R,T}.
$$
We can write the equations for $u_1$ and $u_2$ as follows:
\eq\label{eq-u12}
\left\{\aligned
&\p_tu_j-a\Delta u_j+cu_j=f_j-h_j,\q&  (t,x)\in Q_{R,T},\\
&u_j=0,\q &(t,x)\in \Gamma_{R,T},\\
&u_j(0,x)=u_j^0,\q& x\in B_R^+,
\endaligned\right.
\eeq
$j=1,2$, and write the equation for $u_3$ as follows:
\eq\label{eq-u3}
\left\{\aligned
&\p_tu_3-a\Delta u_3+cu_3=f_3-h_3, \q&  (t,x)\in Q_{R,T},\\
&{\p u_3\over\p\nu}=0,\q & (t,x)\in \Gamma_{R,T},\\
&u_3(0,x)=u_3^0,\q&x\in B_R^+.
\endaligned\right.
\eeq

However, since $\Gamma_{R,T}$ is only a subset of the parabolic boundary of $Q_{R,T}$,
\eqref{eq-u12} and \eqref{eq-u3} are not exactly the standard initial-boundary value problem of parabolic equations with Dirichlet or Norman boundary condition.
We shall modify $u_j$'s to get the standard initial-boundary problems of parabolic equations.

Given $0<R<R_0$, we can take a domain $U$ with $C^{2+\a}$ boundary and a smooth function $\eta$ supported in $U$ such that
\eq\label{domain-U}
B_R^+\subset U\subset B_{R_0}^+,\q \eta(x)=1\;\;\text{if } x\in B_R^+,\q \eta(x)=0\;\;\text{if } x\in B^+_{R_0}\setminus\ol{U}.
\eeq
In fact we can first choose $\tilde R$ such that $R<\tilde R<R_0$. Take a smooth cur-off function $\eta$ such that
\eq\label{eta}
\spt(\eta)\subset B_{\tilde R},\q \eta=1\q\text{in }B_{\tilde R},\qq {\p\eta\over \p\nu}=0\q\text{on }\Sigma_{\tilde R}.
\eeq
Then we  take a domain $\tilde U$ with smooth boundary such that
$$
\overline{B}_{\tilde R}\subset \tilde U\subset \overline{\tilde U}\subset B_{R_0}.
$$
We can choose $\tilde U$ such that $U\equiv \tilde U\cap B_{R_0}^+$ has $C^{2+\a}$ boundary. Then $U$ and $\eta$ satisfy \eqref{domain-U}.

Let $\w=\eta\u$. Then
$$\w(t,x)=\0\q\text{for }0< t\leq T,\;\; x\in \p U\setminus \Sigma_{R_1}.
$$
Moreover we actually have $\w(t,x)=0$ for $x\in  U\setminus B_{R_1}$, thus
$$
{\p\over \p\nu}(\nu\cdot\w)=0\q\text{for }\;\; x\in \p U\setminus \Sigma_{R_1}.
$$
If $x\in \Sigma_{R_1}$, then from \eqref{eta} we have
$$
{\p \over \p\nu}(\nu\cdot\w)={\p\over\p\nu}(\eta\nu\cdot\u)=\eta{\p\over\p\nu}(\nu\cdot\u)+(\nu\cdot\u){\p\over \p\nu}\eta=0.
$$
So we have
\eq
\w_T=\0,\qq {\p\over\p\nu}(\nu\cdot\w)=0\qq\text{if } 0<t\leq T,\;\; x\in\p U.
\eeq

Denote
$$
G_T=(0,T]\times U,\qq
L_T=(0,T]\times \p U.
$$
We see that $\w$ is a weak solution of a modified system on $G_T$, namely
\eq\label{eqw}
\left\{\aligned
&{\p\w\over\p t}-a\,\Delta \w+\mB\,\curl\w+c\w=\F,\q\div\w=g,\q\;& (t,x)\in G_T,\\
&\w_T=\0,\qqq\, &(t,x)\in L_T,\\
&\w(0,x)=\w^0,\qq\;\;\;& x\in U,
\endaligned\right.
\eeq
where
$$\aligned
&\F=\eta\f-a(\Delta\eta\u+2\sum_{j=1}^3\p_j\eta\p_j\u)-\mB(\nabla\eta\times\u),\\
&g=\nabla\eta\cdot\u,\qq \w^0=\eta\u^0.
\endaligned
$$

Now we write
$$
\w=(w_1,w_2,w_3)^t,\q \F=(F_1,F_2,F_3)^t,\q
\mB\,\curl\w=\H=(H_1,H_2,H_3)^t.
$$
Then $w_1$, $w_2$ satisfy
\eq\label{eq-w12}
\left\{\aligned
&\p_tw_j-a\Delta w_j+cw_j=F_j-H_j,\q&  (t,x)\in G_{T},\\
&w_j=0,\q &(t,x)\in L_T,\\
&w_j(0,x)=w_j^0,\q& x\in U,
\endaligned\right.
\eeq
$j=1,2$, and $w_3$ satisfies
\eq\label{eq-w3}
\left\{\aligned
&\p_tw_3-a\Delta w_3+cw_3=F_3-H_3, \q&  (t,x)\in G_{T},\\
&{\p w_3\over\p\nu}=0,\q & (t,x)\in L_T,\\
&w_3(0,x)=w_3^0,\q&x\in U.
\endaligned\right.
\eeq
From the assumption on $\u^0$ and \eqref{adm2} we see that the following compatibility for the parabolic Dirichlet problem \eqref{eq-w12} condition is satisfied for $x\in \p U$ and for $j=1,2$:
$$
w^0_j(x)=0,\qq -a(0,x)\Delta w^0_j(x)=F_j(0,x)-H_j(0,x).
$$

We can apply the theory of regularity of parabolic equations to get a priori estimates of the solutions $\w$ in terms of $F_j$'s and $H_j$'s. However we can not directly get the final estimation by iteration the local estimates on $B_R^+$. Recall that the standard iteration processes  such as the bootstrap argument require the right hand terms be controlled by the unknowns. In our case $F_j$'s can be controlled by $\nabla \u$, but not by $\nabla \w$. So we can not improve the regularity on $F_j$'s over the whole region $B_R^+$ by iteration.
Nevertheless, we have improved the regularity of $\w$ in $B_R^+$, then we get the improved regularity of $\u$ in $B_{R_1}^+$ with some $R_1<R$ where $\eta=1$. Hence we can improve the regularity of $F_j$'s over $B_{R_1}^+$. Then we can iterate the above estimation to get the further improved estimates of the solution $\w$ on $B_{R_1}^+$ in terms of $F_j$'s.
We iterate this procedure in a finite times to get improved estimates on smaller regions.

%In the following, in each step of iteration we derive a local estimate on the solution, then we summarize the local estimates together to get the estimates over the entire domain, hence we can improve the control on $F_j$' and $H_j$'s. Then we can derive the better local estimates by iteration.

Following the above idea, we shall first estimate the estimates of the solution $\w$ of \eqref{eqw} in terms of $\F$ and $g$.

(a) First of all, by the Sobolev imbedding in $\Bbb R^3$ we have
$$
L^2(0,T; H^2(U))\hookrightarrow L^2(0,T, W^{1,p}(U)),\q\forall 1<p<\infty.
$$

(b)
\eq\label{F}
\aligned
&|\H|\leq C|\curl\w|,\\
&|\F|\leq C(|\f|+|\u|+|\nabla \u|).
\endaligned
\eeq

In the following we derive the $L^p$ estimates.

{\it Step 1}.  $L^p$ estimate for $w_1, w_2$.

We take the following iteration argument. If $1<p<\infty$ is such that
\eq\label{dpsi-Lp}
\|\curl\w\|_{L^p(G_T)}< \infty,\qq \|\F\|_{L^p(G_T)}<\infty,
\eeq
then
\eq\label{H-Lp}
\|\H\|_{L^p(G_T)}<\infty,
\eeq
and we can
apply the global $L^p$ estimate for Dirichlet problem of heat equation (see \cite[p.176, Theorem 7.17]{Lib})
% also \cite[p.24, Theorem 3.11]{H})
to \eqref{eq-w12} to get, for $j=1,2$,
\eq\label{wjLp1}
\aligned
\|w_j\|_{W^{2,1,p}(G_T)}\leq& C\{ \|F_j-H_j\|_{L^p(G_T)}+\|w_j^0\|_{W^{2,p}(U)}\}\\
\leq & C\{\|\F\|_{L^p(G_T)}+\|\curl\w\|_{L^p(G_T)}+\|w_j^0\|_{W^{2,p}(U)}\},
\endaligned
\eeq
where $C$ depends only on $U, T, a, c, p$.

Now \eqref{dpsi-Lp} is true for $p=2$ by the assumption, hence by \eqref{wjLp1} we have $$
w_j\in W^{2,1,2}(G_T),\q j=1,2.
$$
Then by Sobolev imbedding (see \cite[p.26, Theorem 3.14 (i)]{H} with
$$
p_1\equiv q={(n+2)p\over n+2-p}={5p\over 3}={10\over 3}
$$
when $p=2$) we see that $|\nabla_x w_j|\in L^{p_1}(G_T)$ with $p_1=10/3$, and
$$\aligned
\|\nabla_x w_j\|_{L^{p_1}(G_T)}\leq& C\|w_j\|_{W^{2,1,2}(G_T)}
\leq C\{\|\F\|_{L^2(G_T)}+\|\curl\w\|_{L^2(G_T)}+\|w_j^0\|_{W^{2,2}(U)}\}
\endaligned
$$
for $j=1, 2$, where $C$ depends only on $U, T, a, c, p_1$.

{\it Step 2}. $L^p$ estimate for $w_3$.

If $1<p<\infty$ is such that \eqref{dpsi-Lp} holds hence \eqref{H-Lp} is true, then we can apply the global $L^p$ estimate for Neumann problem of heat equation to \eqref{eq-w3} to get
\eq\label{w3Lp1}
\aligned
\|w_3\|_{W^{2,1,p}(G_T)}\leq& C\{\|F_3-H_3\|_{L^p(G_T)}   +\|w_3^0\|_{W^{2,p}(U)}\}\\
\leq &  C\{\|\F\|_{L^p(G_T)}+\|\curl\w\|_{L^p(G_T)}   +\|w_3^0\|_{W^{2,p}(U)}\},
\endaligned
\eeq
where $C$ depends only on $U, T, a, c, p$.

Now \eqref{dpsi-Lp} is true for $p=2$ by the assumption, hence by \eqref{w3Lp1} $w_3\in W^{2,1,2}(G_T)$. Then from the Sobolev imbedding (see \cite[p.26, Theorem 3.14 (i)]{H} with $p=2$) we see that $|\nabla_x w_3|\in L^{p_1}(G_T)$ with $p_1=10/3$, and
$$
\|\nabla_x w_3\|_{L^{p_1}(G_T)}\leq C\|w_3\|_{W^{2,1,2}(G_T)},
$$
where $C$ depends only on $U, T, a, c, p_1$.

Combining the results for $w_1, w_2, w_3$ we get
\eq\label{w-Lp1}
\aligned
&\|\nabla_x \w\|_{L^{p_1}(G_T)}\leq\|\w\|_{W^{2,1,p}(G_T)}
\leq C\{\|\F\|_{L^2(G_T)}+\|\curl\w\|_{L^2(G_T)}+\|\w^0\|_{W^{2,2}(U)}\},
\endaligned
\eeq
where $C$ depends only on $U, T, a, c, p_1$.
It follows that \eqref{dpsi-Lp} is true with $p$ replaced by $p_1=10/3$.

{\it Step 3.} By the choice of the cut-off function $\eta$ we have $\eta=1$ on $\overline{B^+_R}$, so $\w=\u$ on $\overline{B^+_R}$. From steps 1 and 2 we see that
$$
\u\in W^{2,1,2}(Q_{R,T},\Bbb R^3),
$$
and
\eq\label{u-Lp1}
\aligned
\|\u\|_{L^{p_1}(Q_{R,T})}\leq &\|\u\|_{W^{2,1,2}(Q_{R,T})}\leq \|\w\|_{W^{2,1,2}(G_T)}\\
\leq&  C\{\|\F\|_{L^{2}(G_T)}+\|\curl\w\|_{L^{2}(G_T)}+\|\w^0\|_{W^{2,2}(U)}\},
\endaligned
\eeq
where $C$ depends only on $R, R_0, T, U, \eta, a, c, p_1$. We can construct the domain $U$ in \eqref{domain-U} and the cut-off function $\eta$ which depend only on $R$ and $R_0$. Then in the above inequality the constant $C$ depends only on $R, R_0, T, a, c$.
Then we have
$$\aligned
&\|\curl\w\|_{L^2(G_T)}\leq C\{ \|\u\|_{L^2(G_T)}+\|\nabla_x \u\|_{L^2(G_T)}\},\\
&\|\w^0\|_{W^{2,2}(U)}\leq C\|\u^0\|_{W^{2,2}(B_{R_0}^+)},
\endaligned
$$
where $C$ depends only on $R, R_0$. From \eqref{F} and \eqref{u-Lp1} we see that, for $p_1=10/3$,
$$\aligned
\|\F\|_{L^{p_1}(Q_{R,T})}\leq& C\{\|\f\|_{L^{p_1}(Q_{R,T})}+\|\u\|_{L^{p_1}(Q_{R,T})}+\|\nabla \u\|_{L^{p_1}(Q_{R,T})}\}\\
\leq & C\{\|\f\|_{L^{p_1}(G_T)}+\|\F\|_{L^2(G_T)}+\|\curl\w\|_{L^2(G_T)}+\|\w^0\|_{W^{2,2}(U)}\}\\
\leq &C\{\|\f\|_{L^{p_1}(G_T)}
+\|\u\|_{L^2(G_T)}+\|\nabla_x\u\|_{L^2(G_T)}+\|\curl\w\|_{L^2(G_T)}+\|\w^0\|_{W^{2,2}(U)}\}\\
\leq &C\{\|\f\|_{L^{p_1}(Q_{R_0,T})}+\|\u\|_{L^2(Q_{R_0,T})}+\|\nabla_x\u\|_{L^2(Q_{R_0,T})}+\|\u^0\|_{W^{2,2}(B^+_{R_0})}\},
\endaligned
$$
where $C$ depends only on $R, R_0, T, a, c, p_1$.

{\it Step 4.}
Let $0<R_2<R_1<R_0$ be given. We change $R$ to $R_1$ in the above argument to get that $\u\in W^{2,1,2}(Q_{R_1,T})$ with
\eq\label{u-Lp1R1}
\aligned
&\|\u\|_{L^{p_1}(Q_{R_1,T})}\leq \|\u\|_{W^{2,1,2}(Q_{R_1,T})}\\
\leq& C\{\|\f\|_{L^{p_1}(Q_{R_0,T})}+\|\u\|_{L^2(Q_{R_0,T})}+\|\nabla_x\u\|_{L^2(Q_{R_0,T})}+\|\u^0\|_{W^{2,2}(B^+_{R_0})}\},
\endaligned
\eeq
where $p_1=10/3$ and $C$ depends only on $R_1, R_0, T, a, c, p_1$. Then
$$
\|\F\|_{L^{p_1}(Q_{R_1,T})}\leq C\{\|\f\|_{L^{p_1}(\O_{R_0,T})}+\|\u\|_{L^2(Q_{R_0,T})}+\|\nabla_x\u\|_{L^2(Q_{R_0,T})}+\|\u^0\|_{W^{2,2}(B^+_{R_0})}\},
$$
where $C$ depends only on $R_1, R_0, T, a, c, p_1$. From \eqref{w-Lp1} and \eqref{u-Lp1} we have
\eq\label{H-Lp1}
\aligned
&\|\H\|_{L^{p_1}(Q_{R_1,T})}\leq C\|\curl\w\|_{L^{p_1}(Q_{R_1,T})}\\
\leq& C\{\|\f\|_{L^{p_1}(Q_{R_0,T})}+\|\u\|_{L^2(Q_{R_0,T})}+\|\nabla_x\u\|_{L^2(Q_{R_0,T})}+\|\u^0\|_{W^{2,2}(B^+_{R_0})}\},
\endaligned
\eeq
where $C$ depends only on $R_1, R_0, T, a, c, p_1$.

Then we can repeat the above argument with $R$ and $R_0$ replaced by $R_2$ and $R_1$ to get $W^{2,1,p_1}$ estimate on $\O_{R_2,T}$. More precisely,
we can take a domain $U_2$ with $C^{2+\a}$ boundary and a smooth function $\eta_2$ supported in $U_2$ such that
\eq\label{domain-U2}
B_{R_2}^+\subset U_2\subset B_{R_1}^+,\q \eta_2(x)=1\;\;\text{if } x\in B_{R_2}^+,\q \eta_2(x)=0\;\;\text{if } x\in B^+_{R_1}\setminus U_2.
\eeq
Set $G^2_T=(0,T]\times U_2$.
Then the conditions \eqref{dpsi-Lp} and \eqref{H-Lp} hold on $G^2_T$ with $p$ replaced by $p_1$.
As in steps 1.1 and 1.2, we apply the $L^p$ estimates of heat equation to get \eqref{wjLp1} and \eqref{w3Lp1}
on $G^2_T$ with $p$ replaced by $p_1$, then we get \eqref{u-Lp1} with $R$ and $R_0$  replaced
by $R_2$ and $R_1$. So we have now
\eq\label{u-Lp2R2}
\aligned
&\|\u\|_{L^{p_2}(Q_{R_2,T})}\leq \|\u\|_{W^{2,1,p_1}(Q_{R_2,T})}\\
\leq& C\{\|\f\|_{L^{p_1}(Q_{R_1,T})}+\|\u\|_{L^{p_1}(Q_{R_1,T})}
+\|\nabla\u\|_{L^{p_1}(Q_{R_1,T})}
+\|\u^0\|_{W^{2,p_1}(B^+_{R_1})}\}\\
\leq& C\{\|\f\|_{L^{p_1}(Q_{R_0,T})}+\|\u\|_{L^2(Q_{R_0,T})}+\|\nabla_x\u\|_{L^2(Q_{R_0,T})}+\|\u^0\|_{W^{2,p_1}(B^+_{R_0})}\},
\endaligned
\eeq
where
$$
p_2={(n+2)p_1\over n+2-p_1}={5p_1\over 5-p_1}=10
$$
and $C$ depends only on $R_1, R_0, T, a, c, p_2$. It further follows that
$$
\F\in L^{p_2}(Q_{R_2,T},\Bbb R^3),\qq \H\in L^{p_2}(Q_{R_2,T},\Bbb R^3).
$$

{\it Step 5}.
Now we fix $0<R<R_0$ and let
$$
R_k=R_0-{R_0-R\over 2^k},\q p_{k+1}={5p_k\over 5-p_k},\q k=1, 2,3,\cdots.
$$
After having got
$$
\u\in W^{2,1,p_k}(Q_{R_k,T},\Bbb R^3),
$$
hence
$$
\F\in L^{p_{k+1}}(Q_{R_k,T},\Bbb R^3),\qq \H\in L^{p_{k+1}}(Q_{R_k,T},\Bbb R^3),
$$
we apply the iteration argument with $p=p_{k+1}$ on $Q_{R_k,T}$ to conclude that
$$
\u\in W^{2,1,p_{k+1}}(Q_{R_{k+1,T}},\Bbb R^3),
$$
and
\eq
\aligned
&\|\u\|_{W^{2,1,p_{k+1}}(Q_{R_{k+1},T})}\\
\leq& C\{\|\f\|_{L^{p_{k+1}}(Q_{R_k,T})}+\|\u\|_{L^{p_k}(Q_{R_k,T})}+\|\nabla \u\|_{L^{p_k}(Q_{R_k,T})}+\|\u^0\|_{W^{2,p_{k+1}}(B^+_{R_k})}\}\\
\leq& C\{\|\f\|_{L^{p_{k+1}}(Q_{R_0,T})}+\|\u\|_{L^2(Q_{R_0,T})}+\|\nabla_x\u\|_{L^2(Q_{R_0,T})}+\|\u^0\|_{W^{2,p_{k+1}}(B^+_{R_0})}\},
\endaligned
\eeq
$C$ depends only on $R, R_0, T, a, c, p_k$.

Note that $p_k\to\infty$ and $0<R<R_k<R_0$ for all $k$.

{\it Step 6.}
For any $1<q<\infty$, we can choose $U$ and $\eta$ such that \eqref{domain-U} and \eqref{eta} hold. Then we repeat the $L^q$ estimate on $\w=\eta\u$ on $G_T=(0,T]\times U$ to get $w^{2,1,q}$ estimate for $\w$ on $G_T$, which implies that
$$
\u\in W^{2,1,q}(Q_{R,T},\Bbb R^3),
$$
and
\eq
\aligned
&\|\u\|_{W^{2,1,q}(Q_{R,T})}\leq \|\w\|_{W^{2,1,q}(G_T)}\\
\leq& C\{\|\f\|_{L^q(G_T)}+\|\w\|_{L^q(G_T)}+\|\curl\w\|_{L^q(G_T)}+\|\w^0\|_{W^{2,q}(U)}\}\\
\leq & C\{ \|\f\|_{L^q(Q_{R_0,T})}+\|\u\|_{L^2(Q_{R_0,T})}+\|\nabla_x\u\|_{L^2(Q_{R_0,T})}+\|\u^0\|_{W^{2,q}(B^+_{R_0})}\},
\endaligned
\eeq
$C$ depends only on $R, R_0, T, a, c, q$.
\end{proof}

\subsection{$C^{1+\a,(1+\a)/2}$-estimates}\

\begin{Cor}\label{Cor2.2} Under the assumption of Lemma \ref{Lem2.1}, if $0<\a<1$ and $0<R<R_0$, the weak solution $\u$ is of
$C^{1+\a,\a/2}$, and
$$
\|\u\|_{C^{1+\a,(1+\a)/2}(\overline{Q}_{R,T})}\leq C\{ \|\f\|_{L^q(Q_{R_0,T})}+\|\u\|_{L^2(Q_{R_0,T})}+\|\nabla_x\u\|_{L^2(Q_{R_0,T})}+\|\u^0\|_{W^{2,q}(B^+_{R_0})}\},
$$
where $5/(1-\a)<q<\infty$, $C$  depends only on $R, R_0, T, a, c, \a, q$.
\end{Cor}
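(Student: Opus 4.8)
The plan is to upgrade the integrability estimate of Lemma~\ref{Lem2.1} to a H\"older estimate by invoking the anisotropic (parabolic) Sobolev--Morrey embedding. First I would apply Lemma~\ref{Lem2.1} with the given $R<R_0$ and a fixed exponent $q$ satisfying $5/(1-\a)<q<\infty$ to obtain $\u\in W^{2,1,q}(Q_{R,T},\mR^3)$ together with
\eq\label{cor-w21q}
\|\u\|_{W^{2,1,q}(Q_{R,T})}\leq C\{\|\f\|_{L^q(Q_{R_0,T})}+\|\u\|_{L^2(Q_{R_0,T})}+\|\nabla_x\u\|_{L^2(Q_{R_0,T})}+\|\u^0\|_{W^{2,q}(B^+_{R_0})}\},
\eeq
where $C$ depends only on $R,R_0,T,a,c,q$. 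Note that this bound already incorporates the initial slice $t=0$ through the term $\|\u^0\|_{W^{2,q}}$, so the subsequent embedding will yield H\"older continuity on all of $\overline{Q}_{R,T}$, including the bottom of the cylinder.

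The key analytic input is the embedding of the parabolic Sobolev space into a parabolic H\"older space. Since the spatial dimension is $n=3$, the parabolic homogeneous dimension is $n+2=5$, and for any $q>5$ one has the continuous embedding $W^{2,1,q}(Q_{R,T})\hookrightarrow C^{1+\b,(1+\b)/2}(\overline{Q}_{R,T})$ with $\b=1-5/q$ (see \cite{H,Lib}); here both $\u$ and its spatial gradient are H\"older continuous, with $\nabla_x\u\in C^{\b,\b/2}$ and $\u$ of class $C^{(1+\b)/2}$ in $t$. The hypothesis $q>5/(1-\a)$ is made precisely so that $5/q<1-\a$, hence $\b=1-5/q>\a$. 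Applying this embedding componentwise to $\u$ gives $\u\in C^{1+\b,(1+\b)/2}(\overline{Q}_{R,T})$ together with the bound $\|\u\|_{C^{1+\b,(1+\b)/2}(\overline{Q}_{R,T})}\leq C\|\u\|_{W^{2,1,q}(Q_{R,T})}$.

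Finally, since $\b>\a$ and $\overline{Q}_{R,T}$ is bounded, the inclusion of parabolic H\"older spaces $C^{1+\b,(1+\b)/2}(\overline{Q}_{R,T})\hookrightarrow C^{1+\a,(1+\a)/2}(\overline{Q}_{R,T})$ holds with a constant depending only on $\b-\a$ and the parabolic diameter of $Q_{R,T}$. Chaining this with the embedding of the previous paragraph and with \eqref{cor-w21q}, and absorbing all constants into a single $C=C(R,R_0,T,a,c,\a,q)$, yields exactly the asserted estimate.

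The only point requiring care is the applicability of the Sobolev--Morrey embedding on the cylinder $Q_{R,T}$ over the half-ball $B^+_R$, whose spatial base carries a corner along $\{|x|=R,\,x_3=0\}$ and whose parabolic boundary includes the initial face. This is harmless because $B^+_R$ satisfies a uniform cone (indeed Lipschitz) condition, which is all that the embedding theorem requires; alternatively one may first apply Lemma~\ref{Lem2.1} on the slightly larger smooth domain $U$ of \eqref{domain-U} and then restrict to $\overline{Q}_{R,T}$. I expect this geometric verification, together with the bookkeeping ensuring that the embedding constant depends only on the listed data, to be the main (though essentially routine) obstacle.
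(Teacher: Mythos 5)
Your proposal is correct and follows essentially the same route as the paper: apply Lemma~\ref{Lem2.1} to get the $W^{2,1,q}(Q_{R,T})$ bound and then invoke the parabolic Sobolev--Morrey embedding (the paper cites \cite[p.26, Theorem 3.14 (3)]{H} with $p=q>5/(1-\a)$) to land in $C^{1+\a,(1+\a)/2}(\overline{Q}_{R,T})$. Your explicit intermediate exponent $\b=1-5/q>\a$ and the remark on the Lipschitz regularity of $B^+_R$ (or passing through the smooth domain $U$) are routine elaborations of the same argument, not a different method.
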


\begin{proof} We apply the Sobolev imbedding  given in \cite[p.26, Theorem 3.14 (3)]{H} (with $p=q>5/(1-\a)$) to conclusion that
$$
\u\in C^{1+\a,(1+\a)/2}(\overline{Q}_{R,T},\Bbb R^3)
$$
and
$$
\|\u\|_{C^{1+\a,(1+\a)/2}(\overline{Q}_{R,T})}\leq C\|\u\|_{W^{2,1,q}(Q_{R,T})}.
$$
Then the conclusion follows from Lemma \ref{Lem2.1}.
\end{proof}

\subsection{Schauder estimates}\

\begin{Lem}\label{Lem2.3}
Assume that
$$\aligned
&a,\; c\in C^{\a,\a/2}(\overline{Q}_{R_0,T}),\qq a(t,x)\geq a_0>0,\\
&\mB\in C^{\a,\a/2}(\overline{Q}_{R_0,T},M(3)),\qq \f\in C^{\a,\a/2}(\overline{Q}_{R_0,T},\Bbb R^3),\\
& \u^0\in C^{2+\a}(\overline{B}_{R_0}^+,\Bbb R^3),\qq \div\u^0=0\;\;\text{\rm in }B_{R_0},\qq \u^0_T=\0\;\;\text{\rm on }\Sigma_{R_0},
\endaligned
$$
and assume \eqref{adm} holds.
If $\u$ is a weak solution of \eqref{eqB.1} on $Q_{R_0,T}$, then for any $0<R<R_0$ we have $\u\in C^{2+\a,1+\a/2}(\overline{Q}_{R,T})$ and
$$
\|\u\|_{C^{2+\a,1+\a/2}(\overline{Q}_{R,T})}\leq C\{\|\f\|_{C^{\a,\a/2}(\overline{Q}_{R_0,T})}+\|\u\|_{L^2(Q_{R_0,T})}+\|\nabla_x\u\|_{L^2(Q_{R_0,T})}+\|\u^0\|_{C^{2+\a}(\overline{B}_{R_0}^+)}\},
$$
where $C$ depends only on $R_0$, $R$, $T$, $\a$, $\mB$, $a$, $c$.
\end{Lem}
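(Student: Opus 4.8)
The plan is to use the $C^{1+\a,(1+\a)/2}$ regularity already supplied by Corollary \ref{Cor2.2} as the bootstrap input that turns the curl coupling into an admissible Schauder forcing term, and then to rerun the cut-off and decoupling device from the proof of Lemma \ref{Lem2.1}, replacing the $L^q$ estimates by the parabolic \emph{Schauder} estimates for the scalar Dirichlet and Neumann problems. First I would fix an intermediate radius $R<R_1<R_0$ and apply Corollary \ref{Cor2.2} with some $q>5/(1-\a)$ (legitimate since $\f\in C^{\a,\a/2}\subset L^q$ and $\u^0\in C^{2+\a}\subset W^{2,q}$) to obtain $\u\in C^{1+\a,(1+\a)/2}(\overline{Q}_{R_1,T})$, with the $C^{1+\a,(1+\a)/2}$ norm bounded by the right-hand side of the claimed inequality. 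The decisive consequence is that $\nabla_x\u\in C^{\a,\a/2}(\overline{Q}_{R_1,T})$, so that $\curl\u\in C^{\a,\a/2}$ and, since $\mB\in C^{\a,\a/2}$, the coupling term $\mB\,\curl\u\in C^{\a,\a/2}$.

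Next I would repeat the construction of the proof of Lemma \ref{Lem2.1}: choose a cut-off $\eta$ as in \eqref{eta} and a domain $U$ with $C^{2+\a}$ boundary as in \eqref{domain-U}, now with $B_R^+\subset U\subset B_{R_1}^+$, and set $\w=\eta\u$, so that $\w$ solves \eqref{eqw} and decouples into the Dirichlet problems \eqref{eq-w12} for $w_1,w_2$ and the Neumann problem \eqref{eq-w3} for $w_3$. Using the explicit form of $\F$ together with the $C^{1+\a,(1+\a)/2}$ bound on $\u$ just obtained, every term of $\F$ (products of smooth cut-off factors, the $C^{\a,\a/2}$ coefficients $a,\mB$, and the $C^{\a,\a/2}$ quantities $\u$ and $\nabla_x\u$) lies in $C^{\a,\a/2}(\overline{G}_T)$; likewise $\H=\mB\,\curl\w\in C^{\a,\a/2}$ and $cw_j\in C^{\a,\a/2}$. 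Thus the right-hand sides $F_j-H_j$ of \eqref{eq-w12} and \eqref{eq-w3} are genuine $C^{\a,\a/2}$ data.

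I would then verify the first-order parabolic compatibility conditions. For the Dirichlet problems these read $w_j^0=0$ and $-a(0,x)\Delta w_j^0=F_j(0,\cdot)-H_j(0,\cdot)$ on $\p U$, which were already checked in the proof of Lemma \ref{Lem2.1} using \eqref{adm2}; for the Neumann problem the compatibility $\p w_3^0/\p\nu=0$ on $\p U$ holds by the construction of $\eta$ together with the divergence-free and tangential conditions on $\u^0$. Applying the scalar parabolic Schauder estimates for the Dirichlet problem (to $w_1,w_2$) and for the Neumann problem (to $w_3$) on the domain $U$ with $C^{2+\a}$ boundary yields $\w\in C^{2+\a,1+\a/2}(\overline{G}_T)$ together with
$$
\|w_j\|_{C^{2+\a,1+\a/2}(\overline{G}_T)}\leq C\{\|F_j-H_j\|_{C^{\a,\a/2}(\overline{G}_T)}+\|w_j^0\|_{C^{2+\a}(\overline{U})}\}.
$$
Since $\eta\equiv 1$ on $B_R^+$ we have $\w=\u$ there, so $\u\in C^{2+\a,1+\a/2}(\overline{Q}_{R,T})$; tracing the right-hand side back through the estimate of Corollary \ref{Cor2.2}, which bounds $\|\F\|_{C^{\a,\a/2}}$ and $\|\H\|_{C^{\a,\a/2}}$ by $\|\f\|_{C^{\a,\a/2}}$, $\|\u\|_{L^2}$, $\|\nabla_x\u\|_{L^2}$ and $\|\u^0\|_{C^{2+\a}}$, produces the asserted bound.

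The main obstacle, and the reason the preceding $W^{2,1,q}$ and $C^{1+\a}$ steps are indispensable, is that $\mB\,\curl\u$ is a first-order term in the unknown and is therefore \emph{not} a priori Hölder continuous: it cannot be frozen as a fixed $C^{\a,\a/2}$ forcing term at the outset. Only after Corollary \ref{Cor2.2} upgrades $\u$ to $C^{1+\a,(1+\a)/2}$ does $\curl\u$ land in $C^{\a,\a/2}$, at which point the system genuinely decouples into two scalar Schauder problems with admissible data. A secondary technical point is that $\Gamma_{R,T}$ is only part of the parabolic boundary of $Q_{R,T}$, which is exactly why the cut-off reduction to the closed domain $U$ (rather than a direct application on the half-ball) must precede the invocation of the standard scalar Schauder theory.
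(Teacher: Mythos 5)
Your proposal is correct and takes essentially the same route as the paper: it bootstraps through Corollary \ref{Cor2.2} (with $q$ determined by $\a$) to place $\nabla_x\u$, hence $\mB\,\curl\u$, in $C^{\a,\a/2}$, then reuses the cut-off domain $U$ and the Dirichlet/Neumann decoupling of $\w=\eta\u$ from the proof of Lemma \ref{Lem2.1}, and closes with the scalar parabolic Schauder estimates before tracing the constants back through Corollary \ref{Cor2.2}. The only difference is cosmetic: you spell out the compatibility conditions explicitly, which the paper verifies within the proof of Lemma \ref{Lem2.1} and carries over implicitly.
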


\begin{proof}
In the following we use the fact that if $u\in  C^{1+\a,(1+\a)/2}(\overline{\O}_{R,T})$, then $\nabla_x u\in C^{\a,\a/2}(\overline{\O}_{R,T})$.

Take $R_1$ depending only on $R$ and $R_0$ such that $R<R_1<R_0$.
From Corollary \ref{Cor2.2} we know that $\u\in C^{1+\a,(1+\a)/2}(\overline{Q}_{R_1,T},\Bbb R^3)$, hence $\curl\u\in C^{\a,\a/2}(\overline{Q}_{R_1,T},\Bbb R^3)$, thus
$$
\mB\,\curl\u\in C^{\a,\a/2}(\overline{Q}_{R_1,T},\Bbb R^3).
$$

Take a domain $U$ and a smooth cut-off function $\eta$ such that \eqref{domain-U} is satisfied with $R_0$ replaced by $R_1$. Set $G_T=(0,T]\times U$ and $L_T=(0,T]\times\p U$. Set $\w=\eta\u$. Then $\w$ is a weak solution of \eqref{eqw}, where $\F\in C^{\a,\a/2}(\overline{G}_T,\Bbb R^3)$ because $\f\in C^{\a,\a/2}(\overline{G}_T,\Bbb R^3)$ and $\p_j\u\in C^{\a,\a/2}(\overline{G}_T,\Bbb R^3)$.

Again we write $\w=(w_1,w_2,w_3)^t$, $\F=(F_1,F_2,F_3)^t$, $\mB\,\curl\w=\H=(H_1,H_2,H_3)^t$. Then $w_1, w_2$ are weak solutions of \eqref{eq-w12} and $w_3$ is a weak solution of \eqref{eq-w3}.

Applying the global Schauder estimate for Dirichlet problem (see \cite[p.78, Theorem 4.28]{Lib})
% also see \cite[p.21, Theorem 3.3]{H})
to \eqref{eq-w12} we have
$$
\|w_j\|_{C^{2+\a,1+\a/2}(\overline{G}_T)}\leq C_D\{\|F_j-H_j\|_{C^{\a,\a/2}(\overline{G}_T)}+\|w_j^0\|_{C^{2+\a}(\overline{U})}\},\q j=1,2.
$$
Then applying the global Schauder estimate for Neumann problem \cite[p.79, Theorem 4.31]{Lib}
%(also see \cite[p.21, Theorem 3.4]{H})
to \eqref{eq-w3} we have
$$
\|w_3\|_{C^{2+\a,1+\a/2}(\overline{G}_T)}\leq C_N\{\|F_3-H_3\|_{C^{\a,\a/2}(\overline{G}_T)}+\|w_3^0\|_{C^{2+\a}(\overline{U})}\}.
$$
Therefore $\w\in C^{2+\a,1+\a/2}(\overline{G}_T,\Bbb R^3)$, and
\eq\label{wca}
\|\w\|_{C^{2+\a,1+\a/2}(\overline{G}_T)}\leq C\{\|\F\|_{C^{\a,\a/2}(\overline{G}_T)}+\|\H\|_{C^{\a,\a/2}(\overline{G}_T)}+\|\w^0\|_{C^{2+\a}(\overline{U})}\}.
\eeq

Note that
$$\aligned
\|\F\|_{C^{\a,\a/2}(\overline{G}_T)}\leq & C\{\|\f\|_{C^{\a,\a/2}(\overline{G}_T)}+\|\u\|_{C^{\a,\a/2}(\overline{G}_T)}+\|\nabla_x\u\|_{C^{\a,\a/2}(\overline{G}_T)},
\\
\|\H\|_{C^{\a,\a/2}(\overline{G}_T)}\leq& C\|\curl\w\|_{C^{\a,\a/2}(\overline{G}_T)}
\leq C\|\w\|_{C^{1+\a,(1+\a)/2}(\overline{G}_T)}.
\endaligned
$$
From these and \eqref{wca} we get
$$
\|\w\|_{C^{2+\a,1+\a/2}(\overline{G}_T)}\leq C\{\|\f\|_{C^{\a,\a/2}(\overline{G}_T)}+\|\u\|_{C^{1+\a,(1+\a)/2}(\overline{G}_T)}+\|\w^0\|_{C^{2+\a}(\overline{U})}\}.
$$
Using the construction of $G_T$ and Corollary \ref{Cor2.2} with $R$ replaced by $R_1$, and with the index $q$ determined by $\a$, we have
$$\aligned
&\|\u\|_{C^{1+\a,(1+\a)/2}(\overline{G}_T)}\leq \|\u\|_{C^{1+\a,(1+\a)/2}(\overline{Q}_{R_1,T})}\\
\leq& C\{\|\f\|_{L^q(Q_{R_1,T})}+\|\u\|_{L^2(Q_{R_0,T})}+\|\nabla_x\u\|_{L^2(Q_{R_0,T})}
+\|\u^0\|_{C^{2+\a}(\overline{B}_{R_0}^+)}\}\\
\leq& C\{\|\f\|_{C^{\a,\a/2}(\overline{Q}_{R_0,T})}+\|\u\|_{L^2(Q_{R_0,T})}+\|\nabla_x\u\|_{L^2(Q_{R_0,T})}
+\|\u^0\|_{C^{2+\a}(\overline{B}_{R_0}^+)}\},
\endaligned
$$
where $C$ depends only on $R, R_0, T, \a, \mB, a, c$, as $R_1$ is determined by $R$ and $R_0$.
Hence we get
$$
\|\u\|_{C^{2+\a,1+\a/2}(\overline{Q}_{R,T})}\leq C\{\|\f\|_{C^{\a,\a/2}(\overline{Q}_{R_0,T})}+\|\u\|_{L^2(Q_{R_0,T})}+\|\nabla_x\u\|_{L^2(Q_{R_0,T})}+\|\u^0\|_{C^{2+\a}(\overline{B}^+_{R_0})}\}.
$$
\end{proof}

\section{Estimates Near Curved Boundary}

\subsection{Computations in local coordinates near
boundary}\

Let us briefly recall the local coordinates near boundary $\p\O$
determined by a diffeomorphism that straightens a piece of
surface, see \cite[section 3]{P} and \cite[Appendix]{BaP}. Let us fix a point $x_0\in \p\O$, and
introduce new variables $y_1$, $y_2$ such that $\p\O$ can be
represented (at least near $x_0$) by $\r=\r(y_1,y_2)$, and
$\r(0,0)=x_0$. Here and henceforth we denote $y=(y_1, y_2) $ and use
the notation $\r_j(y)=\p_{y_j}\r(y)$, $\r_{ij}=\p_{y_iy_j}\r(y)$,
etc. Let
$$\n(y)={\r_1(y)\times\r_2(y)\over |\r_1(y)\times\r_2(y)|}.
$$
We choose $(y_1, y_2)$ in such a way that $\n(y)$ is the inward
normal of $\p\O$, and that the $y_1$- and $y_2$-curves on $\p\O$
are the lines of principal curvature; thus, $\r_1(y)$ and $\r_2(y)$ are
orthogonal to each other. Let
$$g_{ij}(y)=\r_i(y)\cdot\r_j(y),\qq
g(y)=\det (g_{ij}(y))=g_{11}(y)g_{22}(y).
$$
Let us define a map $\mathcal F$ by
$$x={\mathcal F}(y, z)=\r(y_1,
y_2)+z\n(y_1, y_2).
$$
${\mathcal F}$ is a diffeomorphism from a ball
$B_R(0)$ onto a neighborhood $\mathcal U$ of the point $x_0$,  and it
maps the half ball $B_R^+(0)$ onto a subdomain $\mathcal U\cap\O$, and
maps the disc $\{(y_1,y_2,0): y_1^2+y_2^2<R^2\}$ onto a subset $\Gamma$ of
$\p\O$.

Denote the partial derivative $\p_{y_j}$ by $\p_j$, $j=1,2$, and denote $\p_z$ by $\p_3$.  Let
$$G_{ij}(y,z)=\p_i\mathcal F\cdot\p_j\mathcal F,\q i,j=1,2,3,
$$
and let
$G^{ij}(y,z)$ denote the elements of the inverse of the matrix
$(G_{ij}(y,z))_{3\times 3}$. Then
$$\aligned
&G_{jj}(y,z)=g_{jj}(y)[1-\k_j(y)z]^2,\qq G^{jj}={1\over G_{jj}},\qq j=1,2,\\
&G_{12}=G_{13}=G_{23}=G^{12}=G^{13}=G^{23}=0,\qq G_{33}=G^{33}=1,\\
&G(y,z)\equiv \text{det}(G_{ij}(y,z))=G_{11}(y,z)G_{22}(y,z).
\endaligned
$$
Note that if $\p\O$ is of $C^{k+\a}$, then $G_{jj}$ and $G^{jj}$ are of $C^{k-1+\a}$.

On the domain $\mathcal U$ we have an orthogonal coordinate framework
$\{\E_1,\E_2,\E_3\}$, where
$$\aligned
&\E_j(y,z)={\p_j\mathcal F\over|\p_j\mathcal
F|}={\r_j(y)\over\sqrt{g_{jj}}},\q j=1,2;\qq \E_3(y,z)={\p_3\mathcal
F\over|\p_3\mathcal F|}=\n(y).
\endaligned
$$

Given a vector field $\B$ defined on $\overline{\O}$, we can represent $\B$ in a neighborhood of $x_0\in\p\O$ in the
new variables $(y,z)\in B_R^+=\mathcal F^{-1}(\mathcal U\cap\O)$ as
follows:
\eq\label{A.1}
\aligned
&\tilde\B(y,z)=\B({\mathcal F}(y,z))
=\sum_{j=1}^3G^{jj}(y,z)b_j(y,z)\p_j{\mathcal F}(y,z)
=\sum_{j=1}^3\tilde B_j(y,z)\E_j(y,z),\\
&b_j(y,z)=\B(\mathcal F(y,z))\cdot\p_j{\mathcal F}(y,z),\qq \tilde
B_j(y,z)={b_j(y,z)\over\sqrt{G_{jj}(y,z)}}.
\endaligned
\eeq
We compute, at the point $x=\mathcal F(y,z)$,
\eq\label{A.2}
\aligned
&\curl\B(x)=\sum_{j=1}^3\tilde R_j(y,z)\E_j(y,z),\\
&\div\B={1\over\sqrt{G}}\Big[\sum_{j=1}^2\p_j\Big(\sqrt{G\over G_{jj}}\tilde B_j\Big)
+\p_3\Big(\sqrt{G\over G_{33}}\tilde B_3\Big)\Big],
\endaligned
\eeq
where
$$\aligned
\tilde R_1(y,z)&={1\over\sqrt{G_{22}G_{33}}}[\p_2(\tilde
B_3\sqrt{G_{33}})-\p_3(\tilde B_2\sqrt{G_{22}})]
={1\over \sqrt{G_{22}}}[\p_2b_3-\p_3b_2],\\
\tilde R_2(y,z)&={1\over\sqrt{G_{33}G_{11}}}[\p_3(\tilde
B_1\sqrt{G_{11}})-\p_1(\tilde B_3\sqrt{G_{33}})]
={1\over \sqrt{G_{11}}}[\p_3b_1-\p_1b_3],\\
\tilde R_3(y,z)&={1\over\sqrt{G_{11}G_{22}}}[\p_1(\tilde
B_2\sqrt{G_{22}})-\p_2(\tilde B_1\sqrt{G_{11}})]  ={1\over
\sqrt{G_{11}G_{22}}}[\p_1b_2-\p_2b_1].
\endaligned
$$

If we write
$$
\curl^2\B=\sum_{j=1}^3\tilde T_j(y,z)\E_j(y,z),
$$
then
$$\aligned
\tilde T_1(y,z)&={1\over\sqrt{G_{22}G_{33}}}[\p_2(\sqrt{G_{33}}\tilde
R_3)-\p_3(\sqrt{G_{22}}\tilde R_2)]\\
&={1\over\sqrt{G_{22}G_{33}}}\p_2\Big\{  {\sqrt{G_{33}}\over\sqrt{G_{11}G_{22}}}\Big[\p_1(\tilde
B_2\sqrt{G_{22}})-\p_2(\tilde B_1\sqrt{G_{11}})\Big]\Big\}\\
&- {1\over\sqrt{G_{22}G_{33}}}    \p_3\Big\{  {\sqrt{G_{22}}\over\sqrt{G_{33}G_{11}}}\Big[\p_3(\tilde
B_1\sqrt{G_{11}})-\p_1(\tilde B_3\sqrt{G_{33}})\Big]\Big\},
\endaligned
$$
$$\aligned
\tilde T_2(y,z)&={1\over\sqrt{G_{33}G_{11}}}[\p_3(\sqrt{G_{11}}\tilde
R_1)-\p_1(\sqrt{G_{33}}\tilde R_3)]\\
&={1\over\sqrt{G_{33}G_{11}}}\p_3\Big\{  {\sqrt{G_{11}}\over\sqrt{G_{22}G_{33}}}\Big[\p_2(\tilde
B_3\sqrt{G_{33}})-\p_3(\tilde B_2\sqrt{G_{22}})\Big]\Big\}\\
&- {1\over\sqrt{G_{33}G_{11}}}    \p_1\Big\{  {\sqrt{G_{33}}\over\sqrt{G_{11}G_{22}}}\Big[\p_1(\tilde
B_2\sqrt{G_{22}})-\p_2(\tilde B_1\sqrt{G_{11}})\Big]\Big\},
\endaligned
$$
$$\aligned
\tilde T_3(y,z)&={1\over\sqrt{G_{11}G_{22}}}[\p_1(\sqrt{G_{22}}\tilde
R_2)-\p_2(\sqrt{G_{11}}\tilde R_1)]\\
&={1\over\sqrt{G_{11}G_{22}}}\p_1\Big\{  {\sqrt{G_{22}}\over\sqrt{G_{33}G_{11}}}\Big[\p_3(\tilde
B_1\sqrt{G_{11}})-\p_1(\tilde B_3\sqrt{G_{33}})\Big]\Big\}\\
&- {1\over\sqrt{G_{11}G_{22}}}    \p_2\Big\{  {\sqrt{G_{11}}\over\sqrt{G_{22}G_{33}}}\Big[\p_2(\tilde
B_3\sqrt{G_{33}})-\p_3(\tilde B_2\sqrt{G_{22}})\Big]\Big\}.
\endaligned
$$

Let $\u$ be a solution of \eqref{eqB}. In the neighbourhood $\mU$ near boundary, we write
\eq
\aligned
&\tilde\u(t,y,z)=\u(t,{\mathcal F}(y,z))
=\sum_{j=1}^3\tilde u_j(t,y,z)\E_j(y,z),\\
&\div\u={1\over\sqrt{G}}\Big[\sum_{j=1}^2\p_j\Big(\sqrt{G\over G_{jj}}\tilde u_j\Big)
+\p_3\Big(\sqrt{G\over G_{33}}\tilde u_3\Big)\Big],\\
&\curl\u(x)=\sum_{j=1}^3\tilde R_j(t,y,z)\E_j(y,z),\qq
\curl^2\u=\sum_{j=1}^3\tilde T_j(t,y,z)\E_j(y,z),\\
&\mB\,\curl\u=\h=\sum_{j=1}^3\tilde h_j(t,y,z)\E_j(y,\z),\qq \f=\sum_{j=1}^3\tilde f_j(t,y,z)\E_j(y,\z).
\endaligned
\eeq
Recall that $G_{33}=1$. We have
$$\aligned
\tilde T_1&={1\over\sqrt{G_{22}G_{33}}}\p_2\Big\{  {\sqrt{G_{33}}\over\sqrt{G_{11}G_{22}}}\Big[\p_1(\tilde
u_2\sqrt{G_{22}})-\p_2(\tilde u_1\sqrt{G_{11}})\Big]\Big\}\\
&- {1\over\sqrt{G_{22}G_{33}}}    \p_3\Big\{  {\sqrt{G_{22}}\over\sqrt{G_{33}G_{11}}}\Big[\p_3(\tilde
u_1\sqrt{G_{11}})-\p_1(\tilde u_3\sqrt{G_{33}})\Big]\Big\}\\
=&{1\over G_{22}\sqrt{G_{11}}}\Big[\p_{12}(\tilde
u_2\sqrt{G_{22}})-\p_{22}(\tilde u_1\sqrt{G_{11}})\Big]\\
&+{1\over\sqrt{G_{22}G_{33}}}\p_2({\sqrt{G_{33}}\over\sqrt{G_{11}G_{22}}})\Big[\p_1(\tilde
u_2\sqrt{G_{22}})-\p_2(\tilde u_1\sqrt{G_{11}})\Big]\\
&- {1\over G_{33}\sqrt{G_{11}}}\Big[\p_{33}(\tilde
u_1\sqrt{G_{11}})-\p_{13}(\tilde u_3\sqrt{G_{33}})\Big]\\
&- {1\over\sqrt{G_{22}G_{33}}}    \p_3( {\sqrt{G_{22}}\over\sqrt{G_{33}G_{11}}})\Big[\p_3(\tilde
u_1\sqrt{G_{11}})-\p_1(\tilde u_3\sqrt{G_{33}})\Big]\\
=&{1\over G_{22}\sqrt{G_{11}}}\Big[\sqrt{G_{22}}\p_{12}\tilde u_2+\p_2\tilde u_2\p_1\sqrt{G_{22}}+\p_1(\tilde u_2\p_2\sqrt{G_{22}})\\
&\qqq -\sqrt{G_{11}}\p_{22}\tilde u_1-\p_2\tilde u_1\p_2\sqrt{G_{11}}-\p_2(\tilde u_1\p_2\sqrt{G_{11}})\Big]\\
&+{1\over\sqrt{G_{22}G_{33}}}\p_2({\sqrt{G_{33}}\over\sqrt{G_{11}G_{22}}})\Big[\p_1(\tilde
u_2\sqrt{G_{22}})-\p_2(\tilde u_1\sqrt{G_{11}})\Big]\\
&- {1\over G_{33}\sqrt{G_{11}}}\Big[\sqrt{G_{11}}\p_{33}\tilde u_1+\p_3\tilde u_1\p_3\sqrt{G_{11}}+\p_3(\tilde u_1\p_3\sqrt{G_{11}}) -\p_{13}\tilde u_3\Big]\\
&- {1\over\sqrt{G_{22}G_{33}}}    \p_3( {\sqrt{G_{22}}\over\sqrt{G_{33}G_{11}}})\Big[\p_3(\tilde
u_1\sqrt{G_{11}})-\p_1(\tilde u_3\sqrt{G_{33}})\Big]\\
=&-{1\over G_{22}}\p_{22}\tilde u_1-\p_{33}\tilde u_1+{1\over\sqrt{G}}\p_{12}\tilde u_2+{1\over\sqrt{G_{11}}}\p_{13}\tilde u_3+\phi_1,
\endaligned
$$
where
$$\aligned
\phi_1=&{1\over G_{22}\sqrt{G_{11}}}\Big[\p_2\tilde u_2\p_1\sqrt{G_{22}}+\p_1(\tilde u_2\p_2\sqrt{G_{22}})
 -\p_2\tilde u_1\p_2\sqrt{G_{11}}-\p_2(\tilde u_1\p_2\sqrt{G_{11}})\Big]\\
&+{1\over\sqrt{G_{22}}}\p_2({1\over\sqrt{G_{11}G_{22}}})\Big[\p_1(\tilde
u_2\sqrt{G_{22}})-\p_2(\tilde u_1\sqrt{G_{11}})\Big]\\
&- {1\over \sqrt{G_{11}}}\Big[\p_3\tilde u_1\p_3\sqrt{G_{11}}+\p_3(\tilde u_1\p_3\sqrt{G_{11}})\Big]
- {1\over\sqrt{G_{22}}}    \p_3( {\sqrt{G_{22}}\over\sqrt{G_{11}}})\Big[\p_3(\tilde
u_1\sqrt{G_{11}})-\p_1\tilde u_3\Big].
\endaligned
$$

$$\aligned
\tilde T_2&={1\over\sqrt{G_{33}G_{11}}}\p_3\Big\{  {\sqrt{G_{11}}\over\sqrt{G_{22}G_{33}}}\Big[\p_2(\tilde
u_3\sqrt{G_{33}})-\p_3(\tilde u_2\sqrt{G_{22}})\Big]\Big\}\\
&- {1\over\sqrt{G_{33}G_{11}}}    \p_1\Big\{  {\sqrt{G_{33}}\over\sqrt{G_{11}G_{22}}}\Big[\p_1(\tilde
u_2\sqrt{G_{22}})-\p_2(\tilde u_1\sqrt{G_{11}})\Big]\Big\}\\
=&{1\over G_{33}\sqrt{G_{22}}}\Big[\p_{23}(\tilde
u_3\sqrt{G_{33}})-\p_{33}(\tilde u_2\sqrt{G_{22}})\Big]\\
&+{1\over\sqrt{G_{33}G_{11}}}\p_3({\sqrt{G_{11}}\over\sqrt{G_{22}G_{33}}})\Big[\p_2(\tilde
u_3\sqrt{G_{33}})-\p_3(\tilde u_2\sqrt{G_{22}})\Big]\\
&- {1\over G_{11}\sqrt{G_{22}}}\Big[\p_{11}(\tilde
u_2\sqrt{G_{22}})-\p_{12}(\tilde u_1\sqrt{G_{11}})\Big]\\
&- {1\over\sqrt{G_{33}G_{11}}}    \p_1( {\sqrt{G_{33}}\over\sqrt{G_{11}G_{22}}})\Big[\p_1(\tilde
u_2\sqrt{G_{22}})-\p_2(\tilde u_1\sqrt{G_{11}})\Big]\\
=&{1\over G_{33}\sqrt{G_{22}}}\Big[\p_{23}\tilde u_3-\sqrt{G_{22}}\p_{33}\tilde u_2 -\p_3\tilde u_2\p_3\sqrt{G_{22}}-\p_3(\tilde u_2\p_3\sqrt{G_{22}})\Big]\\
&+{1\over\sqrt{G_{33}G_{11}}}\p_3({\sqrt{G_{11}}\over\sqrt{G_{22}G_{33}}})\Big[\p_2(\tilde
u_3\sqrt{G_{33}})-\p_3(\tilde u_2\sqrt{G_{22}})\Big]\\
&- {1\over G_{11}\sqrt{G_{22}}}\Big[\sqrt{G_{22}}\p_{11}\tilde u_2+\p_1\tilde u_2\p_1\sqrt{G_{22}}+\p_1(\tilde u_2\p_1\sqrt{G_{22}})\\
&\qqq  - \sqrt{G_{11}}\p_{12}\tilde u_1-\p_2\tilde u_1\p_1\sqrt{G_{11}}-\p_1(\tilde u_1\p_2\sqrt{G_{11}}) \Big]\\
&- {1\over\sqrt{G_{33}G_{11}}}    \p_1( {\sqrt{G_{33}}\over\sqrt{G_{11}G_{22}}})\Big[\p_1(\tilde
u_2\sqrt{G_{22}})-\p_2(\tilde u_1\sqrt{G_{11}})\Big]\\
=&-{1\over G_{11}}\p_{11}\tilde u_2-\p_{33}\tilde u_2+{1\over\sqrt{G}}\p_{12}\tilde u_1+{1\over\sqrt{G_{22}}}\p_{23}\tilde u_3+\phi_2,
\endaligned
$$
where
$$\aligned
\phi_2=&-{1\over \sqrt{G_{22}}}\Big[\p_3\tilde u_2\p_3\sqrt{G_{22}}+\p_3(\tilde u_2\p_3\sqrt{G_{22}})\Big]
+{1\over\sqrt{G_{11}}}\p_3({\sqrt{G_{11}}\over\sqrt{G_{22}}})\Big[\p_2\tilde
u_3-\p_3(\tilde u_2\sqrt{G_{22}})\Big]\\
&- {1\over G_{11}\sqrt{G_{22}}}\Big[\p_1\tilde u_2\p_1\sqrt{G_{22}}+\p_1(\tilde u_2\p_1\sqrt{G_{22}})
-\p_2\tilde u_2\p_1\sqrt{G_{11}}-\p_1(\tilde u_1\p_2\sqrt{G_{11}})\Big]\\
&- {1\over\sqrt{G_{11}}}    \p_1( {1\over\sqrt{G}})\Big[\p_1(\tilde
u_2\sqrt{G_{22}})-\p_2(\tilde u_1\sqrt{G_{11}}\Big].
\endaligned
$$

$$\aligned
\tilde T_3&={1\over\sqrt{G_{11}G_{22}}}\p_1\Big\{  {\sqrt{G_{22}}\over\sqrt{G_{33}G_{11}}}\Big[\p_3(\tilde
u_1\sqrt{G_{11}})-\p_1(\tilde u_3\sqrt{G_{33}})\Big]\Big\}\\
&- {1\over\sqrt{G_{11}G_{22}}}    \p_2\Big\{  {\sqrt{G_{11}}\over\sqrt{G_{22}G_{33}}}\Big[\p_2(\tilde
u_3\sqrt{G_{33}})-\p_3(\tilde u_2\sqrt{G_{22}})\Big]\Big\}\\
=&{1\over G_{11}\sqrt{G_{33}}}\Big[\p_{13}(\tilde
u_1\sqrt{G_{11}})-\p_{11}(\tilde u_3\sqrt{G_{33}})\Big]\\
&+{1\over\sqrt{G_{11}G_{22}}}\p_1({\sqrt{G_{22}}\over\sqrt{G_{33}G_{11}}})\Big[\p_3(\tilde
u_1\sqrt{G_{11}})-\p_1(\tilde u_3\sqrt{G_{33}})\Big]\\
&- {1\over G_{22}\sqrt{G_{33}}}\Big[\p_{22}(\tilde
u_3\sqrt{G_{33}})-\p_{23}(\tilde u_2\sqrt{G_{22}})\Big]\\
&- {1\over\sqrt{G_{11}G_{22}}}    \p_2( {\sqrt{G_{11}}\over\sqrt{G_{22}G_{33}}})\Big[\p_2(\tilde
u_3\sqrt{G_{33}})-\p_3(\tilde u_2\sqrt{G_{22}})\Big]\\
=&{1\over G_{11}\sqrt{G_{33}}}\Big[\p_{13}\tilde u_1\sqrt{G_{11}}+\p_1\tilde u_1\p_3\sqrt{G_{11}}+\p_3(\tilde u_1\p_1\sqrt{G_{11}})-\p_{11}\tilde u_3 \Big]\\
&+{1\over\sqrt{G_{11}G_{22}}}\p_1({\sqrt{G_{22}}\over\sqrt{G_{33}G_{11}}})\Big[\p_3(\tilde
u_1\sqrt{G_{11}})-\p_1(\tilde u_3\sqrt{G_{33}})\Big]\\
&- {1\over G_{22}\sqrt{G_{33}}}\Big[\p_{22}\tilde u_3  - \sqrt{G_{22}}\p_{23}\tilde u_2-\p_2\tilde u_2\p_3\sqrt{G_{22}}-\p_3(\tilde u_3\p_2\sqrt{G_{22}}) \Big]\\
&- {1\over\sqrt{G_{11}G_{22}}}    \p_2( {\sqrt{G_{11}}\over\sqrt{G_{22}G_{33}}})\Big[\p_2(\tilde
u_3\sqrt{G_{33}})-\p_3(\tilde u_2\sqrt{G_{22}})\Big]\\
=&-{1\over G_{11}}\p_{11}\tilde u_3-{1\over G_{22}}\p_{22}\tilde u_3+{1\over\sqrt{G_{11}}}\p_{13}\tilde u_1+{1\over\sqrt{G_{22}}}\p_{23}\tilde u_2+\phi_3,
\endaligned
$$
where
$$\aligned
\phi_3=&{1\over G_{11}}\Big[\p_1\tilde u_1\p_3\sqrt{G_{11}}+\p_3(\tilde u_1\p_1\sqrt{G_{11}}) \Big]
+{1\over\sqrt{G}}\p_1({\sqrt{G_{22}}\over\sqrt{G_{11}}})\Big[\p_3(\tilde
u_1\sqrt{G_{11}})-\p_1\tilde u_3\Big]\\
&+ {1\over G_{22}}\Big[\p_2\tilde u_2\p_3\sqrt{G_{22}}+\p_3(\tilde u_3\p_2\sqrt{G_{22}}) \Big]
- {1\over\sqrt{G}} \p_2( {\sqrt{G_{11}}\over\sqrt{G_{22}}})\Big[\p_2\tilde
u_3-\p_3(\tilde u_2\sqrt{G_{22}})\Big].
\endaligned
$$

\subsection{Proof of Theorem \ref{Thm1}}\

\begin{proof} We only need to derive regularity near boundary. Let $x^0\in \p\O$ and we take a neighbourhood near $x^0$, and take a differentiable isomorphism to map the neighbourhood $\mU$ of $x^0$ to a domain $U$ with flat boundary $\Gamma$, and the image of $x^0$ locates in the interior of $\Gamma$.

The equation in \eqref{eqB} can be written as
\eq
\p_t\tilde u_j+\tilde a\tilde T_j+\tilde c\tilde u_j=\tilde f_j-\tilde h_j.
\eeq
Now we simplify the formula by using the condition $\div\u=0$, which gives
$$\aligned
0=&\sum_{j=1}^2\p_j\Big(\sqrt{G\over G_{jj}}\tilde u_j\Big)
+\p_3\Big(\sqrt{G\over G_{33}}\tilde u_3\Big)\\
=&\p_1(\sqrt{G_{22}}\tilde u_1)+\p_2(\sqrt{G_{11}}\tilde u_2)+\p_3(\sqrt{G}\tilde u_3)\\
=&\sqrt{G_{22}}\p_1\tilde u_1+\sqrt{G_{11}}\p_2\tilde u_2+\sqrt{G}\p_3\tilde u_3+\tilde u_1\p_1\sqrt{G_{22}}+\tilde u_2\p_2\sqrt{G_{11}}+\tilde u_3\p_3\sqrt{G}.
\endaligned
$$
Hence
\eq\label{div0}
\sqrt{G_{22}}\p_1\tilde u_1+\sqrt{G_{11}}\p_2\tilde u_2+\sqrt{G}\p_3\tilde u_3=-\tilde u_1\p_1\sqrt{G_{22}}-\tilde u_2\p_2\sqrt{G_{11}}-\tilde u_3\p_3\sqrt{G}.
\eeq
Write \eqref{div0} as
$$
{1\over
\sqrt{G}}\p_2\tilde u_2+{1\over\sqrt{G_{11}}}\p_3\tilde u_3=-{1\over G_{11}}\p_1\tilde u_1-{1\over G_{11}\sqrt{G_{22}}}[\tilde u_1\p_1\sqrt{G_{22}}+\tilde u_2\p_2\sqrt{G_{11}}+\tilde u_3\p_3\sqrt{G}].
$$
Differentiating in $y_1$ yields
\eq\label{eq1}
{1\over
\sqrt{G}}\p_{12}\tilde u_2+{1\over\sqrt{G_{11}}}\p_{13}\tilde u_3=-{1\over G_{11}}\p_{11}\tilde u_1+\zeta_1,
\eeq
where
$$\aligned
\zeta_1=&-\p_1\tilde u_1\p_1({1\over G_{11}})-\p_2\tilde u_2\p_1({1\over\sqrt{G}})-\p_3\tilde u_3\p_1({1\over\sqrt{G_{11}}})\\
&-\p_1\Big\{{1\over G_{11}\sqrt{G_{22}}}\Big[\tilde u_1\p_1\sqrt{G_{22}}+\tilde u_2\p_2\sqrt{G_{11}}+\tilde u_3\p_3\sqrt{G}\Big]\Big\}.
\endaligned
$$
Write \eqref{div0} as
$$
{1\over
\sqrt{G}}\p_1\tilde u_1+{1\over\sqrt{G_{22}}}\p_3\tilde u_3=-{1\over G_{22}}\p_2\tilde u_2-{1\over G_{22}\sqrt{G_{11}}}[\tilde u_1\p_1\sqrt{G_{22}}+\tilde u_2\p_2\sqrt{G_{11}}+\tilde u_3\p_3\sqrt{G}].
$$
Differentiating in $y_2$ yields
\eq\label{eq2}
{1\over
\sqrt{G}}\p_{12}\tilde u_1+{1\over\sqrt{G_{22}}}\p_{23}\tilde u_3=-{1\over G_{22}}\p_{22}\tilde u_2+\zeta_2,
\eeq
where
$$\aligned
\zeta_2=&-\p_1\tilde u_1\p_2({1\over\sqrt{G}})-\p_2\tilde u_2\p_{2}({1\over G_{22}})-\p_3\tilde u_3\p_2({1\over\sqrt{G_{22}}})\\
&-\p_2\Big\{{1\over G_{22}\sqrt{G_{11}}}\Big[\tilde u_1\p_1\sqrt{G_{22}}+\tilde u_2\p_2\sqrt{G_{11}}+\tilde u_3\p_3\sqrt{G}\Big]\Big\}.
\endaligned
$$
Write \eqref{div0} as
$$
{1\over \sqrt{G_{11}}}\p_1\tilde u_1+{1\over \sqrt{G_{22}}}\p_2\tilde u_2=-\p_3\tilde u_3-{1\over G}[\tilde u_1\p_1\sqrt{G_{22}}+\tilde u_2\p_2\sqrt{G_{11}}+\tilde u_3\p_3\sqrt{G}].
$$
Differentiating in $z$ yields
\eq\label{eq3}
{1\over \sqrt{G_{11}}}\p_{13}\tilde u_1+{1\over \sqrt{G_{22}}}\p_{23}\tilde u_2=-\p_{33}\tilde u_3+\zeta_3,
\eeq
where
$$
\aligned
\zeta_3=&-\p_1\tilde u_1\p_3({1\over\sqrt{G_{11}}})-\p_2\tilde u_2\p_{3}({1\over G_{22}})
-\p_3\Big\{{1\over G}\Big[\tilde u_1\p_1\sqrt{G_{22}}+\tilde u_2\p_2\sqrt{G_{11}}+\tilde u_3\p_3\sqrt{G}\Big]\Big\}.
\endaligned
$$
Plugging \eqref{eq1}, \eqref{eq2}, \eqref{eq3} into the equalities of $\tilde T_1, \tilde T_2, \tilde T_3$ respectively, we get
$$\aligned
&\tilde T_1=-{1\over G_{11}}\p_{11}\tilde u_1
-{1\over G_{22}}\p_{22}\tilde u_1-\p_{33}\tilde u_1+\zeta_1
+\phi_1,\\
&\tilde T_2=-{1\over G_{11}}\p_{11}\tilde u_2-{1\over G_{22}}\p_{22}\tilde u_2-\p_{33}\tilde u_2+\zeta_2+\phi_2,\\
&\tilde T_3=-{1\over G_{11}}\p_{11}\tilde u_3-{1\over G_{22}}\p_{22}\tilde u_3-\p_{33}\tilde u_3+\zeta_3+\phi_3.
\endaligned
$$

On $\Gamma$ we have $\tilde u_1=\tilde u_2=0$. Then from \eqref{div0} we see that
$$
\p_3\tilde u_3+H\tilde u_3=0,
$$
where $H={\p_3\sqrt{G}\over\sqrt{G}}$. So we find that $\tilde u_1,\tilde u_2$ satisfy
$$
\left\{\aligned
&\p_t\tilde u_j-{\tilde a\over G_{11}}\p_{11}\tilde u_j
-{\tilde a\over G_{22}}\p_{22}\tilde u_j-\tilde a\p_{33}\tilde u_j+\tilde c\tilde u_j=F_j\q&\text{in }(0,T)\times U,\\
&\tilde u_j=0\q&\text{on }(0,T)\times\Gamma, \\
&\tilde u_j=\tilde u_j^0\q&\text{in } U,
\endaligned\right.
$$
and $\tilde u_3$ satisfies
$$
\left\{\aligned
&\p_t\tilde u_3-{\tilde a\over G_{11}}\p_{11}\tilde u_3
-{\tilde a\over G_{22}}\p_{22}\tilde u_3-\tilde a\p_{33}\tilde u_3+\tilde c\tilde u_3=F_3\q&\text{in }(0,T)\times U,\\
&\p_3\tilde u_3+H\tilde u_3=0\q&\text{on }(0,T)\times\Gamma,\\
&\tilde u_3=\tilde u_3^0\q&\text{in } U,
\endaligned\right.
$$
where
$$F_j=\tilde f_j-\tilde h_j-\tilde a(\zeta_j+\phi_j),\qq j=1,2,3.
$$
Note that $\tilde h_j, \tilde \zeta_j, \phi_j$ contain derivatives of $\tilde u_j$ up to the first order, and contain terms involving $G_{jj}$ and their derivatives up to order $2$. Hence if $\p\O$ is of $C^{3+\a}$, then those terms are determined by $\tilde u_j$ and their first order derivatives, with coefficients that are $C^\a$ in $y_1, y_2, z$.

Note that the boundary condition for $\tilde u_3$ can be changed to a homogeneous Neumann boundary condition
if we consider a new function
$$
\hat u_3=e^{\int_0^{x_3}Hdx_3}\tilde u_3.
$$

Although the boundary conditions for $\tilde u_j$ are satisfied only on $\Gamma$, a part of the boundary of $U$, we can multiply $\tilde u_j$ by a smooth cut-off function such that $\tilde u_j$ satisfies the same type boundary condition on $\p U\setminus \Gamma$.
So we can repeat the proof of Lemmas \ref{Lem2.1} and \ref{Lem2.3}, to derive the Schauder estimates for $\tilde u_j$, $j=1,2,3$, in $(0,T]\times B^+_R$. Using the diffeomorphism $\mF$ we obtain the Schauder estimates of $\u$ in $(0,T]\times \mathcal V$, where $\mathcal V$ is a neighborhood of $\mU$ which contains the point $x^0$.

Then the conclusion of the proposition follows by covering a tubular neighbourhood of $\p\O$ with a finite number of domains as above, and using interpolation.
\end{proof}

\v0.1in

\subsection*{Acknowledgements}  This work was partially supported
by the National Natural Science Foundation of China grant no. 12071142 and 11671143, and by the research grants no. UDF01001805 and CUHKSZWDZC0003.

\v0.2in

\end{document}